\newtheorem{proposition}{Proposition}
\newtheorem{lemma}{Lemma}
\theoremstyle{definition}
{}
\theoremstyle{remark} 
\newtheorem{remark}{Remark}
\newcommand{\field}[1]{\ensuremath{\mathbb{#1}}}
\newcommand{\CC}{\field{C}}
\DeclareMathOperator{\Tr}{tr} 
\newcommand{\del}{\partial}
\newcommand{\delb}{\bar\partial}
\newcommand{\C}{{\mathbb{C}}}
\newcommand{\ch}{\mathrm{ch}}
\newcommand{\ta}{\theta}
\newcommand{\tam}{\bm\theta}
\newcommand{\Tam}{\bm\Theta}
\newcommand{\tab}{\bar{\theta}}
\begin{document}
\title[Computation of Chern forms]{Explicit computation of the Chern character forms}
\author{Leon A. Takhtajan}
\address{Department of Mathematics \\
Stony Brook University\\ Stony Brook, NY 11794-3651 \\ USA; Euler Mathematical Institute, Saint Petersburg, Russia}
\date{}
\maketitle
\begin{abstract} We propose a method for explicit computation of the Chern character form of a holomorphic Hermitian vector bundle $(E,h)$ over a complex manifold $X$ in a local holomorphic frame. First, we use  the descent equations arising in the double complex of $(p,q)$-forms on $X$ and find explicit degree decomposition of the Chern-Simons form $\mathrm{cs}_{k}$ associated to the Chern character form $\ch_{k}$ of $(E,h)$. Second, we introduce the `ascent' equations that start from the $(2k-1,0)$ component of  $\mathrm{cs}_{k}$, and use Cholesky decomposition of the Hermitian metric $h$ to represent the Chern-Simons form, modulo $d$-exact forms, as a $\del$-exact form. This yields a formula for the Bott-Chern form $\mathrm{bc}_{k}$ of type $(k-1,k-1)$ such that $\displaystyle{\ch_{k}=\frac{\sqrt{-1}}{2\pi}\delb\del\mathrm{bc}_{k}}$.  Explicit computation is presented for the cases $k=2$ and $3$.
\end{abstract}

\section{Introduction}
Let $V$ be a $C^{\infty}$-complex vector bundle with a connection $\nabla=d+A$ over a smooth manifold $X$.  The Chern character form $\ch(V,\nabla)$ for the pair $(V,\nabla)$ is defined by
$$\ch(V,\nabla)=\Tr\left\{\exp\left(\frac{\sqrt{-1}}{2\pi}\,\nabla^{2}\right)\right\}.$$
Here $\nabla^{2}$ is the curvature of the connection $\nabla$, $\mathrm{End} \,V$-valued $2$-form on $X$, 
and $\Tr$ is the trace in the endomorphism bundle $\mathrm{End} \,V$. The Chern character form is closed, $d\,\ch(V,\nabla)=0$, and its cohomology class in $H^{\ast}(X,\C)$ does not depend on the choice of  $\nabla$ (see, e.g., \cite{GH}).

Let $\nabla^{0}$ and $\nabla^{1}$ be two connections on $V$. In \cite{CS}, S.S. Chern and J. Simons introduced secondary characteristic forms --- 
the Chern-Simons forms $\mathrm{cs}(\nabla^{1},\nabla^{0})$. They are defined modulo exact forms, satisfy the equation 
\begin{equation} \label{cs-1}
d\,\mathrm{cs}(\nabla^{1},\nabla^{0})=\ch(V,\nabla^{1})-\ch(V,\nabla^{0}),
\end{equation} 
and enjoy a functoriality property under the pullbacks with smooth maps. 
When the bundle $V$ is flat, putting $\nabla^{1}=d+A$ and $\nabla^{0}=d$ and using linear homotopy $A(t)=tA$ in the Chern-Weil homotopy formula, one obtains an explicit formula for the Chern-Simons form $\mathrm{cs}(A)$ in terms of $A$.

Let $(E,h)$ be a holomorphic Hermitian vector bundle --- a holomorphic vector bundle of rank $r$ over a complex manifold $X$, $\dim_{\C}X=n$, with a Hermitian metric $h$. Chern-Weil theory associates to any polynomial $\Phi$ on GL$(r,\mathbb{C})$, invariant under conjugation, a differential form $\Phi(\bm\Theta)$ on $X$. Special case of this construction is the  Chern character form $\ch(E,h)$, defined by 
$$\ch(E,h)=\Tr\left\{\exp\left(\frac{\sqrt{-1}}{2\pi}\,\bm{\Theta}\right)\right\}=\sum_{k=0}^{n}\ch_{k}(E,h).$$
Here $\bm{\Theta}$ is the curvature of the canonical connection $d+\bm{\theta}$ in $E$ associated with the Hermitian metric $h$. In the local holomorphic frame, $\bm{\theta}=h^{-1}\del h$ and $\bm{\Theta}=\delb\bm{\theta}$ (see, e.g., \cite{GH}).

Let $h_{1}$ and $h_{2}$ be two Hermitian metrics on a holomorphic vector bundle $E$ over a complex manifold $X$. In the classic paper \cite{BC}, Bott and Chern showed the existence of certain secondary characteristic forms, the Bott-Chern secondary forms $\mathrm{bc}(E,h_{1},h_{2})$. They are defined modulo $\del$ and $\delb$-exact forms, satisfy the equation 
$$\frac{\sqrt{-1}}{2\pi}\delb\del\,\mathrm{bc}(E,h_{1},h_{2})= \ch(E,h_{1})-\ch(E,h_{2})$$
and enjoy the functoriality property with respect to the  pullbacks by holomorphic maps. Here the Chern character forms are computed for canonical connections in $(E,h_{1})$ and $(E,h_{2})$. The Bott-Chern forms have been used in geometric stability \cite{D,Tian}, in higher dimensional Arakelov geometry \cite{GS1,Tam} and in physics \cite{LMNS} (see also \cite{PT} for their application to differential $K$-theory).

However, it is difficult to obtain explicit formulas for the Bott-Chern forms.  It is already mentioned in the remark in  \cite[Sect. 3]{BC} that even for a linear homotopy $h_{t}$ of Hermitian metrics, the homotopy
formula in Proposition 3.15 in \cite{BC} contains the inverse metrics through $\bm\Theta_{t}=\delb (h_{t}^{-1}\del h_{t})$ and does not allow to integrate over $t$ in a closed form.
As the result, it is difficult\footnote{As was observed in \cite{D}, ``One interesting feature is that we have an example of a variational problem with no
simple explicit formula for the Lagrangian''.}  to get explicit formulas for the Bott-Chern forms in terms of Hermitian metrics $h_{1}$ and $h_{2}$ only. 
This problem manifests itself even for the case when $E$ is a trivial bundle with metrics $h_{1}=h$ and $h_{2}=I$, the identity matrix. 

Here we show how using global coordinates on the space of Hermitian positive-definite matrices associated with the Cholesky decomposition, one can obtain explicit formulas for the Bott-Chern forms on trivial bundles.
Namely, in Proposition \ref{D-1} we present explicit decomposition of the Chern-Simons form $\mathrm{cs}_{k}$ associated to the Chern character form  $\ch_{k}=\ch_{k}(E,h)$ into $(p,q)$-degrees. It is done in Section \ref{2} by solving the descent equations from the double complex of $(p,q)$-forms on $X$, applied to $\ch_{k}$.
In Section \ref{3} we introduce the `ascent' equations that start from the $(2k-1,0)$ component of  $\mathrm{cs}_{k}$, and use Cholesky decomposition of the Hermitian metric $h$ to represent the Chern-Simons form, modulo $d$-exact forms, as a $\del$-exact form. This yields an explicit formula for the Bott-Chern form $\mathrm{bc}_{k}$ of type $(k-1,k-1)$ such that $\displaystyle{\ch_{k}=\frac{\sqrt{-1}}{2\pi}\delb\del\mathrm{bc}_{k}}$. It is obtained by repeatedly finding corresponding $\del$-antiderivatives and seems to be very non-local. For the case $k=2,3$ in Propositions \ref{2} and \ref{3} we present explicit local
formulas for these forms in Cholesky coordinates. We believe that such explicit formulas exist for all $k$. In Remark \ref{positive} we prove that the form $\mathrm{bc}_{2}$ is positive, and in Remark \ref{Donaldson} we directly show that for bundles with upper-triangular transition functions the Euler-Lagrange functional $M_{C}(-,K)$ introduced in \cite{D} is bounded below.
\subsection*{Acknowledgments} This work was done under partial support of the NSF grant DMS-1005769. I am grateful to Vamsi Pingali for stimulating discussions and suggestions, and thank Kiyoshi Igusa for his remarks.
\section{Double descent} \label{2}
\subsection{Set-up} Let $h$ be Hermitian metric in rank $r$ trivial complex vector bundle over a complex manifold $X$ (i.e., in general we consider a local holomorphic frame over some open neighborhood). Put (see, e.g., \cite{GH})
$$\bm\theta=h^{-1}\del h\quad\text{and}\quad\bm{\Theta}=\delb\bm\theta. $$
We have the following useful formulas
\begin{equation} \label{tT-1}
\del\bm{\theta}=-\bm{\theta}^{2},\quad \delb\bm{\theta}=\bm{\Theta} \quad\text{and}\quad \del\bm{\Theta}=[\bm{\Theta},\bm{\theta}],\quad \delb\bm{\Theta}=0,
\end{equation}
where for matrix-valued differential forms we write $AB$ instead of $A\wedge B$, etc. In particular, we have
\begin{equation} \label{tT-2}
\del\bm{\Theta}^{k}=[\bm{\Theta}^{k},\bm{\theta}]\quad\text{and}\quad \del(\bm{\theta}\bm{\Theta}^{k})=-\bm{\theta}\bm{\Theta}^{k}\bm{\theta}.
\end{equation}

We have (using a `system of units' such that $\displaystyle{\frac{\sqrt{-1}}{2\pi}=1}$) 
$$\ch_{k}(h)=\frac{1}{k!}\omega_{k,k},$$
where
$$\omega_{k,k}=\Tr\bm{\Theta}^{k}$$
is $\del$ and $\delb$-closed real form of type $(k,k)$; here and in what follows $\omega_{p,q}$ denotes a $(p,q)$-form. It follows from Poincar\'{e} lemma that \emph{locally} (i.e., on some polydisk coordinate chart of $X$) there are forms $\omega_{k+l,k-l-1}$ such that 
\begin{align*}
\omega_{k,k}& =\delb\omega_{k,k-1},\\
\del\omega_{k,k-1}&=\delb\omega_{k+1,k-2},\\
&\,\vdots\\
\del\omega_{2k-2,1}&=\delb\omega_{2k-1,0},\\
\del\omega_{2k-1,0}&=0.
\end{align*}

These descent equations\footnote{Compare with the double descent in \cite{FS} and with the holomorphic descent in \cite{LMNS}.} can be written succinctly as

\begin{equation} \label{descent}
\omega_{k,k}=(\delb-t\del)\!\left(\omega_{k,k-1}+t\omega_{k+1,k-2}+\dots+t^{k-2}\omega_{2k-2,1}+t^{k-1}\omega_{2k-1,0}\right).
\end{equation}
\begin{remark} Putting $t=-1$ we get
$$\omega_{k,k}=d\!\left(\omega_{k,k-1}-\omega_{k+1,k-2}+\dots+(-1)^{k-2}\omega_{2k-2,1}+(-1)^{k-1}\omega_{2k-1,0}\right),$$
where $d=\del+\delb$. This gives an explicit decomposition of the Chern-Simons secondary form $\mathrm{cs}_{k}$ into $(p,q)$-degrees, $p+q=2k-1$:
\begin{equation} \label{cs}
\mathrm{cs}_{k}=\frac{1}{k!}\!\left(\!\omega_{k,k-1}-\omega_{k+1,k-2}+\dots+(-1)^{k-2}\omega_{2k-2,1}+(-1)^{k-1}\omega_{2k-1,0}\!\right)\!.
\end{equation}
\end{remark}
It is easy to compute all these forms using \eqref{tT-1}--\eqref{tT-2} and equations
\begin{equation} \label{tT-3}
\del{\bm\ta}^{2}=0,\quad\delb\bm{\ta}^{2}=[\bm{\Theta},\bm{\theta}].
\end{equation}
First, we observe
$$\omega_{k,k-1}=\Tr(\tam\Tam^{k-1})$$
and state the following result.
\begin{lemma} \label{L-1} We have
$$\del\omega_{k,k-1}=\Tr(\tam^{2}\Tam^{k-1})=\delb\omega_{k+1,k-2},$$
where
$$\omega_{k+1,k-2}=\frac{1}{k+1}\Tr\!\left\{\tam\!\left(\tam^{2}\Tam^{k-2}+\Tam\tam^{2}\Tam^{k-3}+\cdots+\Tam^{k-3}\tam^{2}\Tam +\Tam^{k-2}\tam^{2}\right)\right\}.$$
\end{lemma}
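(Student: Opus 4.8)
The plan is to verify both equalities by direct computation, applying the graded Leibniz rule for $\del$ and $\delb$ together with the structure equations \eqref{tT-1}--\eqref{tT-3}, and then collecting terms using the graded cyclicity of the matrix trace, $\Tr(AB)=(-1)^{(\deg A)(\deg B)}\Tr(BA)$.

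For the first equality I would start from $\omega_{k,k-1}=\Tr(\tam\Tam^{k-1})$ and write $\del\omega_{k,k-1}=\Tr\bigl(\del(\tam\Tam^{k-1})\bigr)$. The identity $\del(\tam\Tam^{k-1})=-\tam\Tam^{k-1}\tam$ from \eqref{tT-2} reduces this to $-\Tr(\tam\Tam^{k-1}\tam)$. Since $\tam\Tam^{k-1}$ has odd degree $2k-1$ while $\tam$ has degree $1$, graded cyclicity produces the sign $(-1)^{2k-1}=-1$, so that $-\Tr(\tam\Tam^{k-1}\tam)=\Tr(\tam^{2}\Tam^{k-1})$, as claimed.

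For the second equality, set $S=\sum_{j=0}^{k-2}\Tam^{j}\tam^{2}\Tam^{k-2-j}$, so that $\omega_{k+1,k-2}=\tfrac{1}{k+1}\Tr(\tam S)$. Applying $\delb$ to each summand and using $\delb\Tam=0$ together with $\delb\tam^{2}=[\Tam,\tam]$ from \eqref{tT-1} and \eqref{tT-3}, I expect $\delb(\Tam^{j}\tam^{2}\Tam^{k-2-j})=\Tam^{j+1}\tam\Tam^{k-2-j}-\Tam^{j}\tam\Tam^{k-1-j}$, a telescoping difference; summing over $j$ then collapses everything to $\delb S=[\Tam^{k-1},\tam]$. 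From the Leibniz rule $\delb(\tam S)=\Tam S-\tam\,\delb S$, and taking the trace, the three resulting pieces each reduce to a multiple of $\Tr(\tam^{2}\Tam^{k-1})$: by cyclicity among the even-degree factors, $\Tr(\Tam S)$ yields one copy for each of the $k-1$ summands, while $-\Tr(\tam\Tam^{k-1}\tam)$ and $\Tr(\tam^{2}\Tam^{k-1})$ each contribute one more, for a total of $k+1$. Dividing by $k+1$ recovers $\Tr(\tam^{2}\Tam^{k-1})$.

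The conceptual heart of the argument is the telescoping of $\delb S$ and the collapse of every surviving trace to the single monomial $\Tr(\tam^{2}\Tam^{k-1})$ with the precise multiplicity $k+1$ matching the normalization. The main obstacle is purely in the bookkeeping: one must track the Koszul signs from the graded Leibniz rule and graded cyclicity consistently, since an error in a single $(-1)$ would spoil the exact cancellation that makes the two $\del$- and $\delb$-images coincide.
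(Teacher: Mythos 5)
Your proof is correct and follows essentially the same route as the paper: the first equality via $\del(\tam\Tam^{k-1})=-\tam\Tam^{k-1}\tam$ and graded cyclicity, and the second via the telescoping of $\delb$ applied to the sum $\sum_{j}\Tam^{j}\tam^{2}\Tam^{k-2-j}$, with the trace collapsing all surviving terms to $(k+1)\Tr(\tam^{2}\Tam^{k-1})$. The only difference is cosmetic --- you factor out $\tam$ and compute $\delb S$ separately, whereas the paper distributes $\delb$ over the full monomials $\tam\Tam^{i}\tam^{2}\Tam^{k-2-i}$ --- and your sign bookkeeping and the multiplicity count $(k-1)+1+1=k+1$ both check out.
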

\begin{proof} Using \eqref{tT-1}--\eqref{tT-2}, we get
$$\del\Tr(\tam\Tam^{k-1})=\Tr(-\tam^{2}\Tam^{k-1}-\tam(\Tam^{k-1}\tam-\tam\Tam^{k-1}))=-\Tr(\tam\Tam^{k-1}\tam)=\Tr\tam^{2}\Tam^{k-1}.$$
Next, using \eqref{tT-1} and \eqref{tT-3}, we get
\begin{align*}
\delb\sum_{i=0}^{k-2}(\tam\Tam^{i}\tam^{2}\Tam^{k-2-i})& =\sum_{i=0}^{k-2}(\Tam^{i+1}\tam^{2}\Tam^{k-2-i})-\sum_{i=0}^{k-2}(\tam\Tam^{i}(\Tam\tam-\tam\Tam)\Tam^{k-2-i})\\
&=\sum_{i=0}^{k-2}(\Tam^{i+1}\tam^{2}\Tam^{k-2-i}) +\tam^{2}\Tam^{k-1} -\tam\Tam^{k-1}\tam,
\end{align*}
since the second sum telescopes. Using the cyclic property of the trace, we get the formula for $\omega_{k+1,k-2}$.
\end{proof}

Observe that $\omega_{k,k-1}$ is a constant term $a_{0}$ in the polynomial
\begin{equation}\label{F}
F_{k}(t)=\Tr\!\left\{\tam(\Tam +t\tam^{2})^{k-1}\right\}=a_{0}+a_{1}t+\cdots + a_{k-1}t^{k-1},
\end{equation}
while by Lemma \ref{L-1},
$$\displaystyle{\omega_{k+1,k-2}=\frac{1}{k+1}a_{1}}.$$ 
This suggests to consider all coefficients $a_{l}$ of $F_{k}(t)$ --- differential forms of types $(k+l,k-l-1)$, $l=0,1,\dots,k-1$.

\begin{lemma}\label{L-2} Put $G_{k}(t)=\Tr(\Tam+t\tam^{2})^{k}$. We have
$$\delb F_{k}(t) -t\del F_{k}(t)=G_{k}(t).$$
\end{lemma}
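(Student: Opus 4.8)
The plan is to avoid differentiating the polynomial $F_k(t)$ coefficient by coefficient and instead exploit the fact that the descent operator $\delb-t\del$ and the quantity $\Tam+t\tam^2$ appearing inside $F_k$ and $G_k$ interact very simply. Write $D=\delb-t\del$ and $\Omega=\Tam+t\tam^2$, so that $F_k(t)=\Tr(\tam\,\Omega^{k-1})$ and $G_k(t)=\Tr\,\Omega^{k}$. Observe that, for $t$ a formal even parameter, $D$ is an odd graded derivation (a $t$-linear combination of the two odd derivations $\del$ and $\delb$) that commutes with $\Tr$.

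First I would record the two structural identities that make everything collapse. Using $\delb\tam=\Tam$ and $\del\tam=-\tam^2$ from \eqref{tT-1}, one gets
$$D\tam=\delb\tam-t\del\tam=\Tam+t\tam^2=\Omega.$$
Using $\delb\Tam=0$, $\del\Tam=[\Tam,\tam]$ from \eqref{tT-1} together with $\del\tam^2=0$, $\delb\tam^2=[\Tam,\tam]$ from \eqref{tT-3}, one computes $\del\Omega=[\Tam,\tam]$ and $\delb\Omega=t[\Tam,\tam]$, whence
$$D\Omega=\delb\Omega-t\del\Omega=t[\Tam,\tam]-t[\Tam,\tam]=0.$$
Thus $\Omega$ is an even-degree form annihilated by $D$; these two facts, $D\tam=\Omega$ and $D\Omega=0$, are really the conceptual content of the lemma.

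With them the computation is immediate. Since $D$ commutes with $\Tr$, we have $DF_k=\Tr\,D(\tam\,\Omega^{k-1})$. Because $\Omega$ is a $2$-form with $D\Omega=0$, the graded Leibniz rule gives $D(\Omega^{k-1})=0$, each term in the expansion carrying a factor $D\Omega$. Applying the graded Leibniz rule once more, with $\tam$ of odd degree $1$,
$$D(\tam\,\Omega^{k-1})=(D\tam)\,\Omega^{k-1}-\tam\,D(\Omega^{k-1})=\Omega\cdot\Omega^{k-1}-0=\Omega^{k},$$
and taking the trace yields $DF_k=\Tr\,\Omega^{k}=G_k$, which is exactly $\delb F_k(t)-t\del F_k(t)=G_k(t)$.

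The only place that requires care — and the main thing I would double-check — is the sign bookkeeping in the graded Leibniz rule: $\tam$ has total degree $1$, so $D(\tam\,\Omega^{k-1})$ picks up the sign $(-1)^{1}$ on the second term. Here this is harmless since $D(\Omega^{k-1})=0$ in any case, but it is the step most prone to error and should be verified against the convention used in \eqref{descent}. Once the two identities $D\tam=\Omega$ and $D\Omega=0$ are in hand, the rest is forced, the effect being to promote the double descent operator $\delb-t\del$ into a single odd derivation for which $\Omega$ is a $D$-closed building block.
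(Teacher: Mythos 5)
Your proof is correct, and it takes a cleaner route than the paper's, though it rests on the same basic identities \eqref{tT-1} and \eqref{tT-3}. The paper never combines the two differentials into a single operator: it first derives $\del(\Tam+t\tam^{2})^{k}=[(\Tam+t\tam^{2})^{k},\tam]$ and $\delb(\Tam+t\tam^{2})^{k}=t[(\Tam+t\tam^{2})^{k},\tam]$, then computes $\del F_{k}$ and $\delb F_{k}$ separately by the graded Leibniz rule, using the graded cyclic property of the trace to reduce terms like $\Tr\bigl(\tam(\Tam+t\tam^{2})^{k-1}\tam\bigr)$, and finally assembles $\del F_{k}(t)=\Tr\bigl\{\tam^{2}(\Tam+t\tam^{2})^{k-1}\bigr\}$ and $\delb F_{k}(t)=t\del F_{k}(t)+G_{k}(t)$. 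Your version, by promoting $D=\delb-t\del$ to a single odd derivation and isolating the two structural facts $D\tam=\Omega$ and $D\Omega=0$ (your $t[\Tam,\tam]-t[\Tam,\tam]=0$ agrees with the paper's commutator formulas, since $[\tam^{2},\tam]=0$), collapses everything to one application of the Leibniz rule at the level of matrix-valued forms, with no trace cyclicity needed at all. What the paper's longer route buys is the explicit intermediate formula $\del F_{k}(t)=\Tr\bigl\{\tam^{2}(\Tam+t\tam^{2})^{k-1}\bigr\}$, which is reused immediately afterwards: the proof of Proposition \ref{D-1} starts from $\frac{dG_{k}}{dt}=k\del F_{k}$, and that identity is exactly the paper's computation of $\del F_{k}$; with your argument it would have to be derived separately. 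One small correction: the sign convention you flag for double-checking has nothing to do with \eqref{descent}; it is just the graded Leibniz rule for $\del,\delb$ on products of matrix-valued forms, and, as you note, it is immaterial here because $D(\Omega^{k-1})=0$ kills the term in question.
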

\begin{proof} It follows from equations \eqref{tT-1} and \eqref{tT-3} that
$$\del(\Tam+t\tam^{2})=[(\Tam+t\tam^{2}),\tam]\quad\text{and}\quad \delb(\Tam+t\tam^{2})=t[(\Tam+t\tam^{2}),\tam],$$
which implies
$$\del(\Tam+t\tam^{2})^{k}=[(\Tam+t\tam^{2})^{k},\tam]\quad\text{and}\quad \delb(\Tam+t\tam^{2})^{k}=t[(\Tam+t\tam^{2})^{k},\tam].$$
Therefore,
\begin{align*}
\del F_{k}(t)& =\Tr\!\left\{-\tam^{2}(\Tam +t\tam^{2})^{k-1}-\tam\!\left((\Tam +t\tam^{2})^{k-1}\tam-\tam(\Tam +t\tam^{2})^{k-1}\right)\right\}\\
& =\Tr\!\left\{\tam^{2}(\Tam +t\tam^{2})^{k-1}\right\}
\end{align*}
and
\begin{align*}
\delb F_{k}(t)& =\Tr\!\left\{\Tam(\Tam +t\tam^{2})^{k-1}-t\tam\!\left((\Tam +t\tam^{2})^{k-1}\tam-\tam(\Tam +t\tam^{2})^{k-1}\right)\right\}\\
& =t\Tr\!\left\{\tam^{2}(\Tam +t\tam^{2})^{k-1}\right\} + \Tr(\Tam+t\tam^{2})^{k},
\end{align*}
so that $(\delb-t\del)F_{k}(t)=G_{k}(t)$.
\end{proof}

From here it is easy to find all descent forms $\omega_{k+l,k-l-1}$.
\begin{proposition} \label {D-1} We have 
$$\mathrm{cs}_{k}=\frac{1}{k!}\!\left(\!\omega_{k,k-1}-\omega_{k+1,k-2}+\dots+(-1)^{k-2}\omega_{2k-2,1}+(-1)^{k-1}\omega_{2k-1,0}\!\right)\!,$$
where
$$\omega_{k+l,k-l-1}=\frac{k!l!}{(k+l)!}a_{l},\quad l=0,1,\dots,k-1.$$
In particular,
$$\omega_{2k-1,0}=\frac{k!(k-1)!}{(2k-1)!}\Tr\bm\theta^{2k-1}.$$
\end{proposition}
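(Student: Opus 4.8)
The plan is to verify that the explicit forms $\omega_{k+l,k-l-1}=\frac{k!\,l!}{(k+l)!}a_l$ solve the descent system and then simply read off the decomposition \eqref{cs}. First I would expand the compact descent equation \eqref{descent} in powers of $t$: matching the coefficient of $t^l$ on both sides shows that a family of forms realizes \eqref{cs} precisely when $\delb\omega_{k,k-1}=\omega_{k,k}$, $\delb\omega_{k+l,k-l-1}=\del\omega_{k+l-1,k-l}$ for $1\le l\le k-1$, and $\del\omega_{2k-1,0}=0$. So it suffices to check that the stated ansatz, with constants $c_l\eqdef\frac{k!\,l!}{(k+l)!}$ and $\omega_{k+l,k-l-1}=c_l a_l$, satisfies these three families of equations. (Recall that the descent forms are only determined up to the ambiguity in the Poincar\'e lemma, so the content of the proposition is that this particular choice is admissible.)

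For this I would assemble two coefficient identities. Writing $G_k(t)=\sum_{l=0}^{k}g_l t^l$, expanding Lemma \ref{L-2} in $t$ gives $\delb a_0=g_0$, then $\delb a_l-\del a_{l-1}=g_l$ for $1\le l\le k-1$, and finally $-\del a_{k-1}=g_k$. This alone is not enough, since the inhomogeneous terms $g_l$ do not vanish for intermediate $l$. The missing ingredient, which is the real crux, is a second relation tying $\del a_{l-1}$ to $g_l$. From the computation inside the proof of Lemma \ref{L-2} one has $\del F_k(t)=\Tr\{\tam^2(\Tam+t\tam^2)^{k-1}\}$, while differentiating $G_k$ and using that $\Tam+t\tam^2$ is an even-degree form (so the trace is cyclic without sign) gives $\frac{d}{dt}G_k(t)=k\,\Tr\{\tam^2(\Tam+t\tam^2)^{k-1}\}=k\,\del F_k(t)$. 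Comparing the coefficient of $t^{l-1}$ yields $\del a_{l-1}=\frac{l}{k}g_l$, equivalently $g_l=\frac{k}{l}\del a_{l-1}$ for $1\le l\le k$.

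Combining the two identities is then a one-line recursion. Substituting $g_l=\frac{k}{l}\del a_{l-1}$ into $\delb a_l=\del a_{l-1}+g_l$ gives $\delb a_l=\frac{k+l}{l}\del a_{l-1}$, so the required relation $\delb(c_l a_l)=\del(c_{l-1}a_{l-1})$ reduces to $c_l\frac{k+l}{l}=c_{l-1}$; the stated constants satisfy exactly this recursion together with $c_0=1$. The two endpoint cases come for free: $g_0=\Tr\Tam^k=\omega_{k,k}$ gives $\delb\omega_{k,k-1}=\omega_{k,k}$, and $g_k=\Tr\tam^{2k}=0$ — this trace vanishes because moving one factor $\tam$ cyclically past the remaining $2k-1$ costs a sign — gives $\del\omega_{2k-1,0}=0$ through $\del a_{k-1}=g_k$.

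Finally I would extract the top form by reading off $a_{k-1}=[t^{k-1}]F_k(t)$: the only contribution is from selecting every $t\tam^2$ factor, namely $\Tr(\tam\cdot\tam^{2(k-1)})=\Tr\bm\theta^{2k-1}$, whence $\omega_{2k-1,0}=c_{k-1}\Tr\bm\theta^{2k-1}=\frac{k!(k-1)!}{(2k-1)!}\Tr\bm\theta^{2k-1}$. I expect the only delicate point to be the sign bookkeeping in the trace identities — both the cyclicity used for $\frac{d}{dt}G_k=k\,\del F_k$ and the vanishing of $\Tr\tam^{2k}$ rest on the parity of the form degrees — but once $\del a_{l-1}=\frac{l}{k}g_l$ is secured the combinatorics collapse to the single recursion above.
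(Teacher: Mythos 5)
Your proof is correct and follows essentially the same route as the paper: both hinge on the identity $\frac{d}{dt}G_{k}(t)=k\,\del F_{k}(t)$ combined with the $t$-expansion of Lemma \ref{L-2} to get the recursion $\delb a_{l}=\frac{k+l}{l}\del a_{l-1}$, from which the constants $\frac{k!\,l!}{(k+l)!}$ follow. Your explicit checks of the endpoint equations ($\delb a_{0}=\omega_{k,k}$, and $\del a_{k-1}=0$ via $\Tr\tam^{2k}=0$) and of $a_{k-1}=\Tr\bm\theta^{2k-1}$ merely spell out details the paper leaves implicit.
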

\begin{proof} We observe that
$$\frac{d G_{k}}{dt}(t)=k\Tr\!\left\{\tam^{2}(\Tam +t\tam^{2})^{k-1}\right\}=k\del F_{k}(t),$$
so that
$$G_{k}(t)=b_{0}+k\del a_{0}\frac{t}{1}+k\del a_{1} \frac{t^{2}}{2}+\dots+k\del a_{k-1}\frac{t^{k}}{k},$$
where $b_{0}=\Tr\Tam^{k}=\omega_{k,k}$. Now it follows from Lemma \ref{L-2} that
$$\delb a_{l}=\left(\frac{k+l}{l}\right)\del a_{l-1},\quad l=1,\dots,k-1,$$
and since $a_{0}=\omega_{k,k-1}$, we easily obtain 
$$a_{l}=\frac{(k+l)\dots(k+1)}{l!}\omega_{k+l,k-l-1}.\qedhere$$
\end{proof}

Thus for $k=1$ we have 
$$\omega_{1,0}=\Tr\bm{\theta}=\del\log\det h\quad\text{and}\quad\omega_{0,0}=\log\det h, $$
whereas for $k=2$
$$\omega_{2,1}=\Tr(\bm{\theta}\bm{\Theta})\quad\text{and}\quad\omega_{3,0}=\frac{1}{3}\Tr\bm{\theta}^{3}.$$
For $k=3$ we have
$$\omega_{3,2}=\Tr(\bm{\theta}\bm{\Theta}^{2}), \quad \omega_{4,1}=\frac{1}{2}\Tr(\bm{\theta}^{3}\bm{\Theta})\quad\text{and}\quad
\omega_{5,0}=\frac{1}{10}\Tr\bm{\theta}^{5},$$
and for $k=4$ from Proposition \ref{D-1} we obtain
$$\omega_{4,3}=\Tr(\bm\ta\bm{\Theta}^{3}),\quad\omega_{5,2}=\frac{1}{5}\Tr\!\left(\bm\ta^{3}\bm{\Theta}^{2}+\bm\ta\bm{\Theta}\bm\ta^{2}\bm{\Theta}+\bm\ta\bm{\Theta}^{2}\bm\ta^{2}\right),\quad
\omega_{6,1}=\frac{1}{5}\Tr(\bm\ta^{5}\bm{\Theta})$$
and
$$\omega_{7,0}=\frac{1}{35}\Tr\bm\ta^{7}.$$

\begin{remark} The forms $\Tr\bm\ta^{2k-1}$, $k\geq 1$, where $\bm\theta=g^{-1}dg$ is a Maurer-Cartan form,
generate the cohomology ring $H^{\bullet}(\mathrm{GL}(\infty,\CC),\mathbb{Q})$ for the stabilized complex general linear group
$\mathrm{GL}(\infty,\CC)$. 
\end{remark}

\section{Double ascent} \label{3}
\subsection{Set-up}
From descent equations it follows that there is a form $\omega_{2k-2,0}$ such that
$$\omega_{2k-1,0}=\del\omega_{2k-2,0}.$$
Now going up from the bottom to the top (this explains the terminology), we get 
$$\del(\omega_{2k-2,1}+\delb\omega_{2k-2,0})=0,$$
so that there is a form $\omega_{2k-3,1}$ such that
$$\omega_{2k-2,1}+\delb\omega_{2k-2,0}=\del\omega_{2k-3,1}.$$
Therefore 
$$\del(\omega_{2k-3,2}+\delb\omega_{2k-3,1})=0$$
and there is a form $\omega_{2k-4,2}$ such that
$$\omega_{2k-3,2}+\delb\omega_{2k-3,1}=\del\omega_{2k-4,2}.$$
Repeating this procedure, we finally get a form $\omega_{k-1,k-1}$ such that
$$\omega_{k,k-1}+\delb\omega_{k,k-2}=\del\omega_{k-1,k-1}.$$

The ascent equations can be written succinctly as
\begin{equation*}
k!\mathrm{cs}_{k}=\del\omega_{k-1,k-1}-d\left(\omega_{k,k-2}-\omega_{k+1,k-3}+\cdots + (-1)^{k}\omega_{2k-2,0}\right).
\end{equation*}
Defining $\mathrm{CS}_{k}$ as $\mathrm{cs}_{k}\!\!\!\mod \mathrm{Im}\,d$ (see \cite{SS}), we get
\begin{equation}
\mathrm{CS}_{k}=\frac{1}{k!}\del\omega_{k-1,k-1}.
\end{equation}
Therefore,
$$\ch_{k}=\frac{1}{k!}\delb\del\omega_{k-1,k-1},$$
so that $\omega_{k-1,k-1}$ is a $k!$ times the Bott-Chern secondary form $\mathrm{bc}_{k}$ (see \cite{BC}).

\begin{remark}
As a corollary, we have the following version of local ``$\del\delb$ lemma'': for each form $\omega$ of type $(k,k)$ on a complex manifold $X$ satisfying $d\omega=0$ on every polydisk neighborhood $U\subset X$ there is a form $\theta_{U}$ on $U$ such that $\left.\omega\right|_{U}=\delb\del\theta_{U}$.
\end{remark}

Solving `explicitly' ascent equations would give explicit local expression of the Chern character form $\ch_{k}$ in terms of the corresponding Bott-Chern form $\mathrm{bc}_{k}$. It is known that it is not possible to get local formulas in terms of the matrix $h$ alone. This is because each step in the ascent procedure uses Poincar\'{e} lemma which, in general, contains an integration through the homotopy formula. However, one can solve these equations explicitly by using the Cholesky decomposition! 

Namely, put 
$$h=cb=b^{\ast}ab,$$ 
where matrix $b$ is upper-triangular with $1$'s on the diagonal, and $a$ is diagonal with positive entries; $a_{i}$ and $b_{ij}$, $i=1,\dots,r, j>i$, are global coordinates on the homogeneous space of hermitian positive-definite $r\times r$ matrices. We get
\begin{equation} \label{bold-t}
\bm{\theta}=h^{-1}\del h=b^{-1}\ta b=b^{-1}(\theta_{1}+\theta_{2})b,
\end{equation}
where 
$$\theta_{1}=\del b\,b^{-1}\quad \text{and}\quad \theta_{2}=c^{-1}\del c.$$
Therefore
\begin{equation}\label{bold-tT-1}
\bm{\Theta}=\delb\bm\ta=b^{-1}\left(\delb\theta-\tab_{1}\ta-\ta\tab_{1}\right)b,
\end{equation}
where
$$\tab_{1}=\delb b\,b^{-1}\quad\text{and}\quad\tab_{2}=c^{-1}\delb c.$$
These matrix-valued $1$-forms satisfy 
\begin{gather}
\del\ta_{1}=\ta_{1}^{2}, \quad\del\ta_{2}=-\ta_{2}^{2},\quad\delb\tab_{1}=\tab_{1}^{2},\quad\delb\tab_{2}=-\tab_{2}^{2},\label{t1} \\
\delb\ta_{1}=-\del\tab_{1} +\ta_{1}\tab_{1}+\tab_{1}\ta_{1},\quad \delb\ta_{2}=-\del\tab_{2} -\ta_{2}\tab_{2}-\tab_{2}\ta_{2} \label{t2}.
\end{gather}
Moreover, since $\ta_{1}$ is nilpotent and
$$\ta_{2}=a^{-1}\ta_{1}^{\ast}a+a^{-1}\del a,$$
we have important property
\begin{equation} \label{trace}
\Tr\left(\ta_{1}^{l_{1}}\tab_{1}^{\bar{l}_{1}}\right)=\Tr\left(\ta_{2}^{l_{2}}\tab_{2}^{\bar{l}_{2}}\right)=0
\end{equation}
for all $l_{1}+\bar{l}_{1}>0$ and $ l_{2}+\bar{l}_{2}>1$. We will also be using 
\begin{equation} \label{bold-tb}
\bm\tab=h^{-1}\delb h=b^{-1}\tab b=b^{-1}(\tab_{1}+\tab_{2})b,
\end{equation}
so that
\begin{equation} \label{bold-T-tb}
\delb\bm\tab=-\bm\tab^{2}\quad\text{and}\quad\bm{\Theta}=-\del\bm\tab-\bm\ta\bm\tab-\bm\tab\bm\ta.
\end{equation}

It turns out that in terms of $\ta_{1},\tab_{1},\ta_{2},\tab_{2}$ and its $\del$ and $\delb$ differentials one can explicitly compute differential forms $\omega_{2k-2-l,l}$ for $l=0,1,\dots, k-1$. 

\begin{remark} \label{Triangular} The Cholesky decomposition is useful since by the holomorphic splitting principle (see, e.g., \cite[Corollary 9.26]{CV}),
for every holomorphic vector bundle $E\rightarrow X$ there exists a variety $Y$ and a flat morphism $p: Y\rightarrow X$ such that the bundle $p^{\ast}(E)$ over $Y$ admits upper-triangular transition functions.
\end{remark}
\subsection{The case $k=2$} Start with the form $\omega_{3,0}=\frac{1}{3}\Tr\bm\ta^{3}$. In terms of the Cholesky decomposition we have
$$\omega_{3,0}=\frac{1}{3}\Tr(\ta_{1}^{3}+ 3\ta^{2}_{1}\ta_{2}+3\ta_{1}\ta_{2}^{2}+\ta_{2}^{3})=\Tr(\ta^{2}_{1}\ta_{2}+\ta_{1}\ta_{2}^{2})=\del\Tr(\theta_{1}\theta_{2}),$$
so that
$$\omega_{2,0}=\Tr(\theta_{1}\theta_{2}).$$
Using \eqref{bold-T-tb} we get
\begin{align*}
\omega_{2,1}& =\Tr(\bm\ta\bm{\Theta})=-\Tr(\bm\ta(\del\bm\tab +\bm\ta(\bm\ta\bm\tab+\bm\tab\bm\ta))=\del\Tr(\bm\ta\bm\tab)+\Tr(\bm\ta^{2}\bm\tab-\bm\ta(\bm\ta\bm\tab+\bm\tab\bm\ta))\\
&=\del\Tr(\ta\tab)-\Tr(\ta^{2}\tab),
\end{align*}
and using \eqref{t2} we obtain
\begin{align*}
\omega_{2,1}+\delb\omega_{2,0} & =\del\Tr(\ta\tab) + \Tr(-\ta^{2}\tab +(\delb\ta_{1}\ta_{2}-\ta_{1}\delb\ta_{2})) \\
& = \del\Tr(\ta\tab)+ \Tr(-\ta^{2}\tab -\del\tab_{1}\ta_{2}+\ta_{1}\del\tab_{2} +(\ta_{1}\ta_{2}+\ta_{2}\ta_{1})\tab)\\
&= \del\Tr(\ta\tab-(\tab_{1}\ta_{2}-\tab_{2}\ta_{1})) \\
&\quad + \Tr(-\ta^{2}\tab +\ta_{1}^{2}\tab_{2}+\tab_{2}\tab_{1}^{2} +(\ta_{1}\ta_{2}+\ta_{2}\ta_{1})\tab)\\
&= \del\Tr(\ta\tab-(\tab_{1}\ta_{2}-\tab_{2}\ta_{1})).
\end{align*}
Thus
\begin{equation} \label{1-1}
\omega_{1,1}=\Tr\!\left(\ta\tab-(\tab_{1}\ta_{2}-\tab_{2}\ta_{1})\right)=\Tr\!\left(2\ta_{2}\tab_{1} +\ta_{2}\tab_{2}\right),
\end{equation}
and we obtain the following result.
\begin{proposition} \label{2} The second Bott-Chern form $\mathrm{bc}_{2}$ of a trivial Hermitian vector bundle $(\CC^{r},h)$ over a complex manifold $X$ in Cholesky coordinates $h=b^{*}ab$ is given by the formula
$$\mathrm{bc}_{2}=\frac{1}{2}\Tr\!\left(2\ta_{2}\tab_{1} +\ta_{2}\tab_{2}\right).$$
Here  $\tab_{1}=\delb b b^{-1}, \theta_{2}=c^{-1}\del c$ and $\tab_{2}=c^{-1}\delb c$.
\end{proposition}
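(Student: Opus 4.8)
The plan is to reduce the assertion to the explicit determination of the single $(1,1)$-form $\omega_{1,1}$ produced by the ascent for $k=2$. By the set-up of Section \ref{3} one has $\ch_2 = \frac{1}{2}\delb\del\omega_{1,1}$, so that $\mathrm{bc}_2 = \frac{1}{2}\omega_{1,1}$; and for $k=2$ the ascent collapses to the single equation $\del\omega_{1,1} = \omega_{2,1} + \delb\omega_{2,0}$, which determines $\omega_{1,1}$ up to the choice of $\del$-antiderivative (an ambiguity matching the $\del$-, $\delb$-exact ambiguity of the Bott-Chern form). Thus it suffices to compute $\omega_{2,0}$ and $\omega_{2,1}$ in Cholesky coordinates and to solve this one $\del$-equation.

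First I would assemble the two inputs from Proposition \ref{D-1}, where $\omega_{3,0} = \frac{1}{3}\Tr\tam^3$ and $\omega_{2,1} = \Tr(\tam\Tam)$. Writing $\tam = b^{-1}(\ta_1+\ta_2)b$ as in \eqref{bold-t} and using the cyclic invariance of the trace, the vanishing \eqref{trace} kills $\Tr\ta_1^3$ and $\Tr\ta_2^3$, leaving $\omega_{3,0} = \Tr(\ta_1^2\ta_2 + \ta_1\ta_2^2)$; the structure relations \eqref{t1} then exhibit this as $\del\Tr(\ta_1\ta_2)$, so $\omega_{2,0} = \Tr(\ta_1\ta_2)$. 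For $\omega_{2,1}$ I would substitute $\Tam = -\del\bm\tab - \tam\bm\tab - \bm\tab\tam$ from \eqref{bold-T-tb} and integrate by parts with $\del\tam = -\tam^2$ from \eqref{tT-1}, which splits off a $\del$-exact piece and, after using conjugation-invariance of the trace, gives $\omega_{2,1} = \del\Tr(\ta\tab) - \Tr(\ta^2\tab)$.

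The crux is the ascent step. I would add $\delb\omega_{2,0} = \delb\Tr(\ta_1\ta_2)$, expand the $\delb$-derivatives of $\ta_1$ and $\ta_2$ via the structure equations \eqref{t2}, and regroup so that every term carrying a $\del$- or $\delb$-differential is absorbed into one $\del$-exact expression. What is left is a differential-free combination of trace monomials in $\ta_1,\ta_2,\tab_1,\tab_2$, and the whole argument hinges on this remainder being identically zero. \textbf{This cancellation is the main obstacle:} it does not follow formally from the bidegree bookkeeping but requires simultaneously invoking the relations \eqref{t1}--\eqref{t2}, the graded cyclicity of the trace, and the vanishing \eqref{trace} (i.e. the strict upper-triangularity of $\ta_1,\tab_1$ and the nilpotent-plus-diagonal shape of $\ta_2,\tab_2$). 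Once the remainder is shown to vanish, I read off $\omega_{1,1} = \Tr(\ta\tab - (\tab_1\ta_2 - \tab_2\ta_1))$, which is precisely \eqref{1-1}.

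Finally I would simplify and verify consistency. Expanding $\ta = \ta_1+\ta_2$ and $\tab = \tab_1+\tab_2$, the monomial $\Tr(\ta_1\tab_1)$ drops by \eqref{trace} and the graded antisymmetry $\Tr(AB) = -\Tr(BA)$ for matrix-valued $1$-forms collapses the remaining cross terms to $\omega_{1,1} = \Tr(2\ta_2\tab_1 + \ta_2\tab_2)$; here $\Tr(\ta_2\tab_2)$ does not vanish, since its diagonal part is $\sum_i a_i^{-2}\,\del a_i\,\delb a_i$. Dividing by $2$ gives the stated formula for $\mathrm{bc}_2$. As a check that $\omega_{1,1}$ is the correct representative, I would apply $\delb$ to the ascent equation and use the descent relation $\omega_{2,2} = \delb\omega_{2,1}$ to confirm $\delb\del\omega_{1,1} = \Tr\Tam^2 = 2\,\ch_2$, so that $\frac{\sqrt{-1}}{2\pi}\delb\del\,\mathrm{bc}_2 = \ch_2$ as required.
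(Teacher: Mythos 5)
Your proposal is correct and is essentially the paper's own proof: the same reduction $\mathrm{bc}_{2}=\tfrac{1}{2}\omega_{1,1}$, the same Cholesky computations $\omega_{2,0}=\Tr(\ta_{1}\ta_{2})$ and $\omega_{2,1}=\del\Tr(\ta\tab)-\Tr(\ta^{2}\tab)$, the same single ascent equation $\del\omega_{1,1}=\omega_{2,1}+\delb\omega_{2,0}$, and the same simplification to \eqref{1-1}. The cancellation you single out as the main obstacle is exactly the step the paper carries out, and it is short: after expanding $\delb\omega_{2,0}$ via \eqref{t2}, integrating by parts with \eqref{t1} and using cyclicity of the trace, the differential-free remainder collapses to $-\Tr\!\left(\ta_{1}^{2}\tab_{1}+\ta_{2}^{2}\tab_{2}\right)$, which vanishes by \eqref{trace}.
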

\begin{remark} \label{positive} Using that $c=b^{\ast}a$, we obtain from \eqref{1-1} that
$$\omega_{1,1}=\Tr\!\left(a^{-1}\del a\wedge a^{-1}\delb a + 2\varphi\wedge\varphi^{\ast}\right),$$
where $\varphi=a^{-1/2}(b^{\ast})^{-1}\del b^{\ast}a^{1/2}$, so that $\sqrt{-1}\,\omega_{1,1}\geq 0$.
\end{remark}
\begin{remark} When 
$$a=\begin{pmatrix}1&0\\0 & e^{\sigma}\end{pmatrix}\quad\text{and}\quad b=\begin{pmatrix}1&\bar{f}\\0 & 1\end{pmatrix},$$
we get
$$\omega_{0,0}=\sigma\quad\text{and}\quad\omega_{1,1}=\Tr\!\left(\del\sigma\wedge\delb\sigma + 2e^{-\sigma}\del f\wedge\delb\bar{f}\right),$$
so that 
$$\frac{1}{2}\delb\del\omega_{1,1} -\frac{1}{2}(\delb\del\omega_{0,0})^{2}=\delb\del\left(e^{-\sigma}\del f\wedge\delb\bar{f}\right),$$
in agreement with Remark 3.4 in \cite{PT}.
\end{remark}

Following Remark \ref{Triangular}, consider rank $r$ Hermitian vector bundle $(E,h)$ with transition functions taking values in the Borel subgroup $B(r)$ of upper-triangular matrices in $\mathrm{GL}(r,\CC)$. In terms of a local trivialization of $E$ --- an open cover $\{U_{\alpha}\}$ of $X$ and holomorphic transition functions $g_{\alpha\beta}: U_{\alpha}\cap U_{\beta}\rightarrow B(r)$, a Hermitian metric $h$ on $E$ is given by a collection $\{h_{\alpha}\}$ of positive-definite Hermitian matrices on $U_{\alpha}$, satisfying 
$$h_{\beta}=g_{\alpha\beta}^{\ast}h_{\alpha}g_{\alpha\beta}\quad\text{on}\quad U_{\alpha}\cap U_{\beta}.$$
Denote by $\mathrm{bc}_{2\alpha}$ the second Bott-Chern form on $U_{\alpha}$ and write $g_{\alpha\beta}=a_{\alpha\beta}b_{\alpha\beta}$, where
 $a_{\alpha\beta}$ are diagonal and $b_{\alpha\beta}$ are unipotent.  From Proposition \ref{2} we obtain
\begin{equation} \label{charts}
\mathrm{bc}_{2\beta}=\mathrm{bc}_{2\alpha}+c_{\alpha\beta}\quad\text{on}\quad U_{\alpha}\cap U_{\beta},
\end{equation}
where
$$c_{\alpha\beta}=\Tr\left\{a_{\alpha\beta}^{-1}\del a_{\alpha\beta}\wedge (\overline{a_{\alpha\beta}^{-1}\del a_{\alpha\beta}}) +a_{\alpha\beta}^{-1}\del a_{\alpha\beta}\wedge a_{\alpha}^{-1}\delb a_{\alpha}+a_{\alpha}^{-1}\del a_{\alpha}\wedge (\overline{a_{\alpha\beta}^{-1}\del a_{\alpha\beta}})  \right\},$$
and depends only on $a_{\alpha\beta}$.
Since $a_{\alpha\beta}$ are holomorphic, we have $\delb\del c_{\alpha\beta}=0$. In particular, if transition functions are unipotent, it follows from \eqref{charts} that local expressions $\{\mathrm{bc}_{2\alpha}\}$ determine a well-defined $(1,1)$-form on $X$.

\begin{remark} \label{Donaldson}
Given two Hermitian metrics $h_1$ and $h_2$ on a holomorphic vector bundle $E$, we define a local Bott-Chern form
$\mathrm{bc}_{2}(h_1,h_2)$ by
$$\mathrm{bc}_{2}(h_1,h_2)=\mathrm{bc}_{2}(h_{1})-   \mathrm{bc}_{2}(h_{2}),$$
where $\mathrm{bc}_{2}(h_{1,2})$ are given in Proposition \ref{2} with $h=h_{1,2}$. It follows from \eqref{charts} that for the bundle $E$
with upper-triangular transition functions $\mathrm{bc}_{2}(h_1,h_2)$ is a well-defined $(1,1)$-form on $X$. In particular, for such bundles Proposition \ref{2} provides an explicit formulas for the functionals $M_{\omega}( -,K)$ and $M_{C}(-,K)$ in Donaldson's paper \cite{D}, and from Remark \ref{positive} one gets that $M_{C}(-,K)$ is bounded below \cite[Corollary 9]{D}.
\end{remark}

\begin{remark} Upper triangular matrices were used for the study the higher Reidemeister torsion in \cite{Igusa}. Though the set-up in this paper and in \cite{Igusa} is different, it would be interesting to compare corresponding calculations.
\end{remark}

\subsection{The case $k=3$} Using \eqref{trace} we get 
\begin{align*}
\omega_{5,0} & =\frac{1}{10}\Tr\bm\theta^{5}=\frac{1}{10}\Tr\ta^{5} \\
& =
\frac{1}{2}\Tr\!\left(\ta_{1}^{4}\ta_{2}+\ta_{1}^{3}\ta_{2}^{2}+\ta_{1}^{2}\ta_{2}\ta_{1}\ta_{2}+\ta_{1}\ta_{2}\ta_{1}\ta_{2}^{2}+\ta_{1}^{2}\ta_{2}^{3} +\ta_{1}\ta_{2}^{4}\right) \\
&=\frac{1}{2}\del\Tr\!\left(\ta_{1}^{3}\ta_{2}+\ta_{1}\ta_{2}^{3}+\frac{1}{2}(\ta_{1}\ta_{2})^{2} \right),
\end{align*}
so that
$$\omega_{4,0}=\frac{1}{2}\Tr\!\left(\ta_{1}^{3}\ta_{2}+\ta_{1}\ta_{2}^{3}+\frac{1}{2}(\ta_{1}\ta_{2})^{2} \right).$$

We will compute $\omega_{4,1}+\delb\omega_{4,0}$ and will find $\omega_{3,1}$ such that 

$$\omega_{4,1}+\delb\omega_{4,0}=\del\omega_{3,1}.$$ 
First using \eqref{bold-T-tb} we get
\begin{align*}
\omega_{4,1} &=\frac{1}{2}\Tr(\bm\theta^{3}\bm{\Theta})=-\frac{1}{2}\Tr(\bm\theta^{3}(\del\bm\tab+\bm\tab\bm\ta+\bm\ta\bm\tab))\\
& = \frac{1}{2}\del\Tr(\bm\ta^{3}\bm\tab)+\frac{1}{2}\Tr(\bm\ta^{4}\bm\tab -\bm\ta^{3}(\bm\tab\bm\ta+\bm\ta\bm\tab))\\
& =\frac{1}{2}\del\Tr(\ta^{3}\tab)-\frac{1}{2}\Tr(\ta^{4}\tab).
\end{align*}
Next, using \eqref{t2} we obtain 
\begin{gather*}
\delb\omega_{4,0} =\frac{1}{2}\Tr\!\left(I_{1}\delb\ta_{1}+I_{2}\delb\ta_{2}\right)\\
 = \frac{1}{2}\Tr\!\left(I_{1}(-\del\tab_{1}+\ta_{1}\tab_{1}+\tab_{1}\ta_{1})+I_{2}(-\del\tab_{2} -\ta_{2}\tab_{2}-\tab_{2}\ta_{2})\right)\\
=\frac{1}{2}\del\Tr\!\left(I_{1}\tab_{1}+I_{2}\tab_{2}\right)+\frac{1}{2}\Tr\!\left((-\del I_{1} +I_{1}\ta_{1}+\ta_{1}I_{1})\tab_{1}-(\del I_{2}+I_{2}\ta_{2}+\ta_{2}I_{2})\tab_{2}\right),
\end{gather*}
where
\begin{align*}
I_{1} & = \ta_{1}^{3}+ \ta_{1}^{2}\ta_{2}+\ta_{2}\ta_{1}^{2}-\ta_{1}\ta_{2}\ta_{1}+\ta_{2}\ta_{1}\ta_{2}+\ta_{2}^{3}=\ta^{3}-\ta\ta_{2}\ta_{1}-\ta_{1}\ta_{2}\ta\\
\intertext{and}
I_{2}& = -(\ta_{1}^{3}+\ta_{1}\ta_{2}\ta_{1}+\ta_{1}\ta_{2}^{2}-\ta_{2}\ta_{1}\ta_{2}+\ta_{2}^{2}\ta_{1} +\ta_{2}^{3})
=-\ta^{3} +\ta\ta_{1}\ta_{2}+\ta_{2}\ta_{1}\ta.
\end{align*}
Using identities
\begin{equation*}
\del I_{1}-I_{1}\ta_{1}-\ta_{1}I_{1}  = -\ta^{4}\quad\text{and}\quad
\del I_{2} +I_{2}\ta_{2}+\ta_{2}I_{2}  =-\ta^{4},
\end{equation*}
we get
$$\omega_{4,1}+\delb\omega_{4,0}=\frac{1}{2}\del\Tr\!\left(\ta^{3}\tab+I_{1}\tab_{1}+I_{2}\tab_{2}\right),$$
so that
$$\omega_{3,1}=\frac{1}{2}\Tr\!\left(\ta^{3}\tab+I_{1}\tab_{1}+I_{2}\tab_{2}\right).$$
Equivalently,
$$\omega_{3,1}=\frac{1}{2}\Tr\!\left(2\ta^{3}\tab_{1}-(\ta_{1}\ta_{2}^{2}+2\ta_{1}\ta_{2}\ta_{1}+\ta_{2}^{2}\ta_{1})\tab_{1}+(\ta_{1}^{2}\ta_{2}+2\ta_{2}\ta_{1}\ta_{2}+\ta_{2}\ta_{1}^{2})\tab_{2} \right). $$

Finally, we will compute $\omega_{3,2}+\delb\omega_{3,1}$ and find $\omega_{2,2}$ such that 

$$\omega_{3,2}+\delb\omega_{3,1}=\del\omega_{2,2}.$$ 

First, using \eqref{bold-T-tb} we obtain
\begin{align*}
\del\Tr(\bm\ta\bm\Theta\bm\tab) & =\Tr\!\left(-\bm\ta^{2}\bm\Theta\bm\tab -\bm\ta(\bm\Theta\bm\ta-\bm\ta\bm\Theta)\bm\tab+\bm\ta\bm\Theta(\bm{\Theta}+\bm\ta\bm\tab+\bm\tab\bm\ta)\right) \\
& = \Tr\!\left(\bm\ta\bm\Theta^{2} +\bm\ta^{2}\bm\Theta\bm\tab \right), \\
\intertext{and}
\del\Tr(\bm\tab\bm\Theta\bm\ta) & =\Tr\!\left(-(\bm{\Theta}+\bm\ta\bm\tab+\bm\tab\bm\ta)\bm\Theta\bm\ta -\bm\tab(\bm\Theta\bm\ta-\bm\ta\bm\Theta)\bm\ta+\bm\tab\bm\Theta\bm\ta^{2}\right)\\
& =-\Tr\!\left(\bm\ta\bm\Theta^{2} +\bm\tab\bm\Theta\bm\ta^{2} \right), 
\end{align*}
so that
$$\omega_{3,2}=\Tr(\bm\ta\bm\Theta^{2})=\del\left\{\frac{1}{2}\Tr(\bm\ta\bm\Theta\bm\tab-\bm\tab\bm\Theta\bm\ta)\right\}-\frac{1}{2}\Tr(\bm\ta^{2}\bm\Theta\bm\tab+ \bm\tab\bm\Theta\bm\ta^{2}).$$
Next, we write
$$\omega_{3,1}=\frac{1}{2}\Tr\!\left(\bm\ta^{3}\bm\tab +\bm I_{1}\bm\tab_{1} +\bm I_{2}\bm\tab_{2}\right)=\omega_{3,1}^{(1)}+\omega_{3,1}^{(2)},$$
where $\bm\tab_{1}=b^{-1}\tab_{1}b$ and $\bm\tab_{2}=b^{-1}\tab_{2}b$  and 
$$\bm I_{1} =\bm\ta^{3}-\bm\ta\bm\ta_{2}\bm\ta_{1}-\bm\ta_{1}\bm\ta_{2}\bm\ta, \quad \bm I_{2} =
-\bm\ta^{3} +\bm\ta\bm\ta_{1}\bm\ta_{2}+\bm\ta_{2}\bm\ta_{1}\bm\ta,$$
where $\bm\ta_{1}=b^{-1}\ta_{1}b$ and $\bm\ta_{2}=b^{-1}\ta_{2}b$.
We have
\begin{align*}
\delb\omega_{3,1}^{(1)} &=\frac{1}{2}\delb\Tr(\bm\theta^{3}\bm\tab) \\
&=\frac{1}{2}\Tr(\bm\Theta\bm\ta^{2}\bm\tab-\bm\ta\bm\Theta\bm\ta\bm\tab +\bm\ta^{2}\bm\Theta\bm\tab+\bm\theta^{3}\bm\tab^{2}) \\
&=\frac{1}{2}\Tr(\bm\tab\bm\Theta\bm\ta^{2}+\bm\ta^{2}\bm\Theta\bm\tab-\bm\ta\bm\Theta\bm\ta\bm\tab +\bm\theta^{3}\bm\tab^{2}),
\end{align*}
so that
$$\omega_{3,2}+\delb\omega_{3,1}^{(1)}=\del\left\{\frac{1}{2}\Tr(\bm\ta\bm\Theta\bm\tab-\bm\tab\bm\Theta\bm\ta)\right\} +\frac{1}{2}\Tr(\bm\theta^{3}\bm\tab^{2}-\bm\ta\bm\tab\bm\ta\bm\Theta).$$
We also have
 
\begin{align*}
\del\Tr(\bm\ta\bm\tab)^{2} &=2\Tr\!\left((-\bm\ta^{2}\bm\tab-\bm\ta\del\bm\tab)\bm\ta\bm\tab)\right)=2\Tr\!\left((-\bm\ta^{2}\bm\tab+\bm\ta\bm\Theta+\bm\ta\bm\ta\bm\tab+\bm\ta\bm\tab\bm\ta)\bm\ta\bm\tab)\right)\\
& =2\Tr\!\left(\bm\ta\bm\Theta\bm\ta\bm\tab+\bm\ta\bm\tab\bm\ta\bm\ta\bm\tab\right),
\end{align*}
so that
$$\Tr(\bm\ta\bm\tab\bm\ta\bm\Theta) =\del\left\{\frac{1}{2}\Tr(\bm\ta\bm\tab)^{2}\right\}-\Tr(\bm\ta^{2}\bm\tab\bm\ta\bm\tab).$$
Thus we obtain
$$\omega_{3,2}+\delb\omega_{3,1}^{(1)}=\del\left\{\frac{1}{2}\Tr\!\left(\bm\ta\bm\Theta\bm\tab-\bm\tab\bm\Theta\bm\ta-\frac{1}{2}(\bm\ta\bm\tab)^{2}\right)\right\} +\frac{1}{2}\Tr\!\left(\bm\theta^{3}\bm\tab^{2}+\bm\ta^{2}\bm\tab\bm\ta\bm\tab\right).$$

So far we have not used the Cholesky decomposition and now we start using it for this case. Note that
$$\omega_{3,2}+\delb\omega_{3,1}^{(1)}=\del\left\{\frac{1}{2}\Tr\!\left(\bm\ta\bm\Theta\bm\tab-\bm\tab\bm\Theta\bm\ta-\frac{1}{2}(\bm\ta\bm\tab)^{2}\right)\right\} +\frac{1}{2}\Tr\!\left(\theta^{3}\tab^{2}+\ta^{2}\tab\ta\tab\right)$$
and it remains to compute
\begin{align*}
\delb\omega_{3,1}^{(2)} & =\frac{1}{2}\delb\Tr\!\left(\bm I_{1}\bm\tab_{1} +\bm I_{2}\bm\tab_{2}\right)=\frac{1}{2}\delb\Tr\!\left(I_{1}\tab_{1} +I_{2}\tab_{2}\right) \\
& =\frac{1}{2}\Tr\!\left(\delb I_{1}\tab_{1} +\delb I_{2}\tab_{2} - I_{1}\tab_{1}^{2} + I_{2}\tab_{2}^{2}\right).
\end{align*}
By a straightforward computation using 
$$\delb\ta=-\del\tab +\ta_{1}\tab_{1}+\tab_{1}\ta_{1}-\ta_{2}\tab_{2}-\tab_{2}\ta_{2}$$
we get
\begin{gather*}
\delb I_{1}\tab_{1} = \\
=\Tr\!\left\{\left[\delb\ta\ta^{2}-\ta\delb\ta\ta+\ta^{2}\delb\ta-\delb\ta\ta_{2}\ta_{1} +\ta(\delb\ta_{2}\ta_{1}-\ta_{2}\delb\ta_{1})-\right.\right.\\
\left.\left. -(\delb\ta_{1}\ta_{2}-\ta_{1}\delb\ta_{2})\ta-\ta_{1}\ta_{2}\delb\ta\right]\tab_{1}\right\} \\
 =\Tr\!\left\{\!\left[-\del\tab\ta^{2}+\ta\del\tab\ta-\ta^{2}\del\tab +\del\tab\ta_{2}\ta_{1}+\ta(\ta_{2}\del\tab_{1}-\del\tab_{2}\ta_{1})+\right.\right.\\
\left.\left. +(\del\tab_{1}\ta_{2}-\ta_{1}\del\tab_{2})\ta+\ta_{1}\ta_{2}\del\tab\,\right]\tab_{1}\right\}+\\
+\Tr\!\left\{\!\left[(\ta_{1}\tab_{1}+\tab_{1}\ta_{1}-\ta_{2}\tab_{2}-\tab_{2}\ta_{2})\ta^{2}-\ta(\ta_{1}\tab_{1}+\tab_{1}\ta_{1}-\ta_{2}\tab_{2}-\tab_{2}\ta_{2})\ta+\right.\right.\\
+\ta^{2}(\ta_{1}\tab_{1}+\tab_{1}\ta_{1}-\ta_{2}\tab_{2}-\tab_{2}\ta_{2})-(\ta_{1}\tab_{1}+\tab_{1}\ta_{1}-\ta_{2}\tab_{2}-\tab_{2}\ta_{2})\ta_{2}\ta_{1}-\\
-\ta((\ta_{2}\tab_{2}+\tab_{2}\ta_{2})\ta_{1}+\ta_{2}(\ta_{1}\tab_{1}+\tab_{1}\ta_{1}))-((\ta_{1}\tab_{1}+\tab_{1}\ta_{1})\ta_{2}+\ta_{1}(\ta_{2}\tab_{2}+\tab_{2}\ta_{2}))\ta -\\
\left.\left.-\ta_{1}\ta_{2}(\ta_{1}\tab_{1}+\tab_{1}\ta_{1}-\ta_{2}\tab_{2}-\tab_{2}\ta_{2})\right]\tab_{1}\right\}
\end{gather*}
and
\begin{gather*}
\delb I_{2}\tab_{2}  = \\
=\Tr\!\left\{\left[-\delb\ta\ta^{2}+\ta\delb\ta\ta-\ta^{2}\delb\ta+\delb\ta\ta_{1}\ta_{2} -\ta(\delb\ta_{1}\ta_{2}-\ta_{1}\delb\ta_{2})+\right.\right.\\
\left.\left. +(\delb\ta_{2}\ta_{1}-\ta_{2}\delb\ta_{1})\ta+\ta_{2}\ta_{1}\delb\ta\right]\tab_{2}\right\} \\
 =\Tr\!\left\{\!\left[\del\tab\ta^{2}-\ta\del\tab\ta+\ta^{2}\del\tab -\del\tab\ta_{1}\ta_{2}-\ta(\ta_{1}\del\tab_{2}-\del\tab_{1}\ta_{2})-\right.\right.\\\left.\left. -(\del\tab_{2}\ta_{1}-\ta_{2}\del\tab_{1})\ta-\ta_{2}\ta_{1}\del\tab\,\right]\tab_{2}\right\}+\\
+\Tr\!\left\{\!\left[-(\ta_{1}\tab_{1}+\tab_{1}\ta_{1}-\ta_{2}\tab_{2}-\tab_{2}\ta_{2})\ta^{2}+\ta(\ta_{1}\tab_{1}+\tab_{1}\ta_{1}-\ta_{2}\tab_{2}-\tab_{2}\ta_{2})\ta-\right.\right.\\
 -\ta^{2}(\ta_{1}\tab_{1}+\tab_{1}\ta_{1}-\ta_{2}\tab_{2}-\tab_{2}\ta_{2})+(\ta_{1}\tab_{1}+\tab_{1}\ta_{1}-\ta_{2}\tab_{2}-\tab_{2}\ta_{2})\ta_{1}\ta_{2}-\\
-\ta(\ta_{1}(\ta_{2}\tab_{2}+\tab_{2}\ta_{2})+(\ta_{1}\tab_{1}+\tab_{1}\ta_{1})\ta_{2})-(\ta_{2}(\ta_{1}\tab_{1}+\tab_{1}\ta_{1})+(\ta_{2}\tab_{2}+\tab_{2}\ta_{2})\ta_{1})\ta +\\
\left.\left.+\ta_{2}\ta_{1}(\ta_{1}\tab_{1}+\tab_{1}\ta_{1}-\ta_{2}\tab_{2}-\tab_{2}\ta_{2})\right]\tab_{2}\right\}.
\end{gather*}
Thus we obtain
$$\Tr(\delb I_{1}\tab_{1}+\delb I_{2}\tab_{2})=J_{1}+J_{2},$$
where
\begin{align*}
J_{1} & =\Tr\!\left\{-\del\tab_{2}(\ta_{1}^{2}+\ta_{2}^{2}+\ta_{1}\ta_{2})\tab_{1}+(\ta_{1}^{2}+\ta_{2}^{2}+\ta_{1}\ta_{2})\del\ta_{1}\tab_{2}-(\ta_{1}^{2}+\ta_{2}^{2}+\ta_{2}\ta_{1})\del\tab_{2}\tab_{1}\right.\\
&\quad +\del\tab_{1}(\ta_{1}^{2}+\ta_{2}^{2}+\ta_{2}\ta_{1})\tab_{2}+\del\tab_{1}(\ta_{2}\ta_{1}-\ta_{1}\ta_{2})\tab_{1}-(\ta_{2}\ta_{1}-\ta_{1}\ta_{2})\del\tab_{1}\tab_{1}\\
&\quad+\del\tab_{2}(\ta_{2}\ta_{1}-\ta_{1}\ta_{2})\tab_{2}-(\ta_{2}\ta_{1}-\ta_{1}\ta_{2})\del\tab_{2}\tab_{2}+\ta_{2}\del\tab_{1}\ta_{2}\tab_{1}+\ta_{2}\del\tab_{2}\ta_{2}\tab_{1}\\
&\quad-\ta_{1}\del\tab_{1}\ta_{1}\tab_{2}-\ta_{1}\del\tab_{2}\ta_{1}\tab_{2}+\ta_{1}\del\tab_{1}\ta_{2}\tab_{1}+\ta_{2}\del\tab_{1}\ta_{1}\tab_{1}\\
&\quad\left.-\ta_{1}\del\tab_{2}\ta_{2}\tab_{2}-\ta_{2}\del\tab_{2}\ta_{1}\tab_{2}-\ta_{1}\del\tab_{2}\ta_{1}\tab_{1}+\ta_{2}\del\tab_{1}\ta_{2}\tab_{2}\right\}
\end{align*}
and
\begin{align*}
J_{2} &=\Tr\!\left\{\left[(\ta_{1}\tab_{1}+\tab_{1}\ta_{1}-\ta_{2}\tab_{2}-\tab_{2}\ta_{2})\ta^{2}-\ta(\ta_{1}\tab_{1}+\tab_{1}\ta_{1}-\ta_{2}\tab_{2}-\tab_{2}\ta_{2})\ta\right.\right.\\
&\quad\left.+\ta^{2}(\ta_{1}\tab_{1}+\tab_{1}\ta_{1}-\ta_{2}\tab_{2}-\tab_{2}\ta_{2})\right](\tab_{1}-\tab_{2})-(\ta_{1}\tab_{1}+\tab_{1}\ta_{1}-\ta_{2}\tab_{2}-\tab_{2}\ta_{2})(\ta_{2}\ta_{1}\tab_{1}-\ta_{1}\ta_{2}\tab_{2}) \\
&\quad -\ta_{1}\ta_{2}(\ta_{1}\tab_{1}+\tab_{1}\ta_{1}-\ta_{2}\tab_{2}-\tab_{2}\ta_{2})\tab_{1}+\ta_{2}\ta_{1}(\ta_{1}\tab_{1}+\tab_{1}\ta_{1}-\ta_{2}\tab_{2}-\tab_{2}\ta_{2})\tab_{2} \\
&\quad -\ta((\ta_{2}\tab_{2}+\tab_{2}\ta_{2})\ta_{1}+\ta_{2}(\ta_{1}\tab_{1}+\tab_{1}\ta_{1}))\tab_{1}-\ta(\ta_{1}(\ta_{2}\tab_{2}+\tab_{2}\ta_{2})+(\ta_{1}\tab_{1}+\tab_{1}\ta_{1})\ta_{2})\tab_{2}\\
&\left.\quad-(\ta_{1}(\ta_{2}\tab_{2}+\tab_{2}\ta_{2})+(\ta_{1}\tab_{1}+\tab_{1}\ta_{1})\ta_{2})\ta\tab_{1}-((\ta_{2}\tab_{2}+\tab_{2}\ta_{2})\ta_{1}+\ta_{2}(\ta_{1}\tab_{1}+\tab_{1}\ta_{1})\ta\tab_{2}\right\}.
\end{align*}
Simplifying and using the cyclic property of the trace, we get
\begin{align*}
J_{1}&=\Tr\!\left\{(\ta_{1}^{2}+\ta_{2}^{2}+\ta_{1}\ta_{2})\del(\tab_{1}\tab_{2})-(\ta_{1}^{2}+\ta_{2}^{2}+\ta_{2}\ta_{1})\del(\tab_{2}\tab_{1})-(\ta_{2}\ta_{1}-\ta_{1}\ta_{2})\del(\tab_{1}^{2}+\tab_{2}^{2})\right.\\
&\quad+(\ta_{2}\del\tab_{1}\ta_{2}\tab_{2}+\ta_{2}\tab_{1}\ta_{2}\del\tab_{2})-(\ta_{1}\del\tab_{1}\ta_{1}\tab_{2}+\ta_{1}\tab_{1}\ta_{1}\del\tab_{2})+(\ta_{2}\del\tab_{1}\ta_{1}\tab_{1}+\ta_{2}\tab_{1}\ta_{1}\del\tab_{1})\\
&\left.\quad-(\ta_{2}\del\tab_{2}\ta_{1}\tab_{2}+\ta_{2}\tab_{2}\ta_{1}\del\tab_{2})+\ta_{2}\del\tab_{1}\ta_{2}\tab_{1}-\ta_{1}\del\tab_{2}\ta_{1}\tab_{2}\right\}\\
&=\del\left\{\Tr\!\left((\ta_{1}^{2}+\ta_{2}^{2}+\ta_{1}\ta_{2})\tab_{1}\tab_{2}-(\ta_{1}^{2}+\ta_{2}^{2}+\ta_{2}\ta_{1})\tab_{2}\tab_{1}-(\ta_{2}\ta_{1}-\ta_{1}\ta_{2})(\tab_{1}^{2}+\tab_{2}^{2})\right.\right.\\
&\left.\left.\quad-\ta_{2}\tab_{1}\ta_{2}\tab_{2}+\ta_{1}\tab_{1}\ta_{1}\tab_{2}-\ta_{2}\tab_{1}\ta_{1}\tab_{1}+\ta_{2}\tab_{2}\ta_{1}\tab_{2}+\tfrac{1}{2}((\ta_{1}\tab_{2})^{2}-(\ta_{2}\tab_{1})^{2}\right)\right\}\\
&\quad+\Tr\left\{-(\ta_{1}^{2}\ta_{2}+\ta_{1}\ta_{2}^{2})\tab_{1}\tab_{2}-(\ta_{2}^{2}\ta_{1}+\ta_{2}\ta_{1}^{2})\tab_{2}\tab_{1}-(\ta_{1}^{2}\ta_{2}+\ta_{1}\ta_{2}^{2}+\ta_{2}^{2}\ta_{1}+\ta_{2}\ta_{1}^{2})(\tab_{1}^{2}+\tab_{2}^{2})\right.\\
&\quad -\ta_{2}^{2}\tab_{1}\ta_{2}\tab_{2}-\ta_{2}^{2}\tab_{2}\ta_{2}\tab_{1}-\ta_{1}^{2}\tab_{1}\ta_{1}\tab_{2}-\ta_{1}^{2}\tab_{2}\ta_{1}\tab_{1}-\ta_{2}^{2}\tab_{1}\ta_{1}\tab_{1}+\ta_{1}^{2}\tab_{1}\ta_{2}\tab_{1}\\
&\quad\left.+\ta_{2}^{2}\tab_{2}\ta_{1}\tab_{2}-\ta_{1}^{2}\tab_{2}\ta_{2}\tab_{2}-\ta_{1}^{2}\tab_{2}\ta_{1}\tab_{2}-\ta_{2}^{2}\tab_{1}\ta_{2}\tab_{1}\right\}=J_{11}+J_{12},
\end{align*}
where
\begin{align*}
J_{11}& = \del\left\{\Tr\!\left((\ta_{1}^{2}+\ta_{2}^{2}+\ta_{1}\ta_{2})\tab_{1}\tab_{2}-(\ta_{1}^{2}+\ta_{2}^{2}+\ta_{2}\ta_{1})\tab_{2}\tab_{1}-(\ta_{2}\ta_{1}-\ta_{1}\ta_{2})(\tab_{1}^{2}+\tab_{2}^{2})\right.\right.\\
&\left.\left.\quad-\ta_{2}\tab_{1}\ta_{2}\tab_{2}+\ta_{1}\tab_{1}\ta_{1}\tab_{2}-\ta_{2}\tab_{1}\ta_{1}\tab_{1}+\ta_{2}\tab_{2}\ta_{1}\tab_{2}+\tfrac{1}{2}((\ta_{1}\tab_{2})^{2}-(\ta_{2}\tab_{1})^{2}\right)\right\}\\
\intertext{and}
J_{12}&= \Tr\left\{-(\ta_{1}^{2}\ta_{2}+\ta_{1}\ta_{2}^{2})\tab_{1}\tab_{2}-(\ta_{2}^{2}\ta_{1}+\ta_{2}\ta_{1}^{2})\tab_{2}\tab_{1}-(\ta_{1}^{2}\ta_{2}+\ta_{1}\ta_{2}^{2}+\ta_{2}^{2}\ta_{1}+\ta_{2}\ta_{1}^{2})(\tab_{1}^{2}+\tab_{2}^{2})\right.\\
&\quad -\ta_{2}^{2}\tab_{1}\ta_{2}\tab_{2}-\ta_{2}^{2}\tab_{2}\ta_{2}\tab_{1}-\ta_{1}^{2}\tab_{1}\ta_{1}\tab_{2}-\ta_{1}^{2}\tab_{2}\ta_{1}\tab_{1}-\ta_{2}^{2}\tab_{1}\ta_{1}\tab_{1}+\ta_{1}^{2}\tab_{1}\ta_{2}\tab_{1}\\
&\quad\left.+\ta_{2}^{2}\tab_{2}\ta_{1}\tab_{2}-\ta_{1}^{2}\tab_{2}\ta_{2}\tab_{2}-\ta_{1}^{2}\tab_{2}\ta_{1}\tab_{2}-\ta_{2}^{2}\tab_{1}\ta_{2}\tab_{1}\right\},\\
\end{align*}
and
\begin{align*}
J_{2} & = \Tr\!\left\{\!\left(\ta^{2}(\tab_{1}-\tab_{2})(\ta_{1}\tab_{1}-\ta_{2}\tab_{2}) + \ta_{1}\ta^{2}(\tab_{1}-\tab_{2})\tab_{1} -\ta_{2}\ta^{2}(\tab_{1}-\tab_{2})\tab_{2}\right.\right.\\
&\quad -\ta(\ta_{1}\tab_{1}-\ta_{2}\tab_{2})\ta(\tab_{1}-\tab_{2})-\ta_{1}\ta(\tab_{1}-\tab_{2})\ta\tab_{1}+\ta_{2}\ta(\tab_{1}-\tab_{2})\ta\tab_{2}\\
&\quad +\ta^{2}(\ta_{1}\tab_{1}-\ta_{2}\tab_{2})(\tab_{1}-\tab_{2})+\ta^{2}(\tab_{1}\ta_{1}-\tab_{2}\ta_{2})(\tab_{1}-\tab_{2})-\ta_{2}\ta_{1}\tab_{1}\ta_{1}\tab_{1} \\
&\quad- \ta_{1}\ta_{2}\ta_{1}\tab_{1}^{2} 
 + \ta_{2}\ta_{1}\tab_{1}\ta_{2}\tab_{2} + \ta_{2}\ta_{2}\ta_{1}\tab_{1}\tab_{2} +\ta_{1}\ta_{2}\tab_{2}\ta_{1}\tab_{1}+\ta_{1}^{2}\ta_{2}\tab_{2}\tab_{1}\\ 
 &\quad-\ta_{1}\ta_{2}\tab_{2}\ta_{2}\tab_{2} -\ta_{2}\ta_{1}\ta_{2}\tab_{2}^{2}
-\ta_{1}\ta_{2}(\ta_{1}\tab_{1}-\ta_{2}\tab_{2})\tab_{1}-\ta_{1}\ta_{2}(\tab_{1}\ta_{1}-\tab_{2}\ta_{2})\tab_{1}\\
&\quad + \ta_{2}\ta_{1}(\ta_{1}\tab_{1}-\ta_{2}\tab_{2})\tab_{2}+\ta_{2}\ta_{1}(\tab_{1}\ta_{1}-\tab_{2}\ta_{2})\tab_{2}
-\ta\ta_{2}\tab\ta_{1}\tab_{1}-2\ta_{2}\ta_{1}\tab_{1}\ta\tab_{2}\\
&\quad-\ta\ta_{2}\ta_{1}\tab^{2}_{1}-\ta\ta_{1}\tab\ta_{2}\tab_{2}-\ta\ta_{1}\ta_{2}\tab_{2}^{2}-2\ta_{1}\ta_{2}\tab_{2}\ta\tab_{1}
 -\ta_{2}\ta\tab_{1}\ta_{1}\tab_{2}
-\ta_{2}\ta\tab_{1}\ta_{1}\tab_{1}\\
&\quad -\ta_{1}\ta_{2}\ta \tab_{1}^{2}
\left.-\ta_{1}\ta\tab_{2}\ta_{2}\tab_{2}-\ta_{2}\ta_{1}\ta\tab_{2}^{2}-\ta_{1}\ta\tab_{2}\ta_{2}\tab_{1}\right\}\!.
\end{align*} 
Simplifying $J_{12}+J_2$ once again and after using numerous `miraculous cancellations', we obtain
\begin{align*}
\tilde{J}_{1}+J_{2}+\Tr(-I_{1}\tab_{1}^{2}+I_{2}\tab_{2}^{2})) 
&=-\Tr\!\left(\theta^{3}\tab^{2}+\ta^{2}\tab\ta\tab\right),
\end{align*}
so that finally
\begin{align*}
\omega_{3,2}+\delb\omega_{3,1} & =\del\Big\{\frac{1}{2}\Tr\!\big(\bm\ta\bm\Theta\bm\tab-\bm\tab\bm\Theta\bm\ta-\frac{1}{2}(\ta\tab)^{2}+(\ta_{1}^{2}+\ta_{2}^{2}+\ta_{1}\ta_{2})\tab_{1}\tab_{2}\\
&\quad -(\ta_{1}^{2}+\ta_{2}^{2}+\ta_{2}\ta_{1})\tab_{2}\tab_{1}- (\ta_{2}\ta_{1}-\ta_{1}\ta_{2})(\tab_{1}^{2}+\tab_{2}^{2})-\ta_{2}\tab_{1}\ta_{2}\tab_{2}\\
&\quad +\ta_{1}\tab_{1}\ta_{1}\tab_{2}-\ta_{2}\tab_{1}\ta_{1}\tab_{1}+\ta_{2}\tab_{2}\ta_{1}\tab_{2}+\frac{1}{2}((\ta_{1}\tab_{2})^{2}-(\ta_{2}\tab_{1})^{2}\big)\Big\}. 
\end{align*}
Thus we obtain the following result.
\begin{proposition} \label{3} The third Bott-Chern form $\mathrm{bc}_{3}$ of a trivial Hermitian vector bundle $(\CC^{r},h)$ over a complex manifold $X$ in Cholesky coordinates $h=b^{*}ab$ is given by the formula
\begin{align*}
\mathrm{bc}_{3} & =\frac{1}{12}\Tr\!\left(\bm\ta\bm\Theta\bm\tab-\bm\tab\bm\Theta\bm\ta-\frac{1}{2}(\ta\tab)^{2}+(\ta_{1}^{2}+\ta_{2}^{2}+\ta_{1}\ta_{2})\tab_{1}\tab_{2}\right.\\
&\quad -(\ta_{1}^{2}+\ta_{2}^{2}+\ta_{2}\ta_{1})\tab_{2}\tab_{1}- (\ta_{2}\ta_{1}-\ta_{1}\ta_{2})(\tab_{1}^{2}+\tab_{2}^{2})-\ta_{2}\tab_{1}\ta_{2}\tab_{2}\\
&\left.\quad +\ta_{1}\tab_{1}\ta_{1}\tab_{2}-\ta_{2}\tab_{1}\ta_{1}\tab_{1}+\ta_{2}\tab_{2}\ta_{1}\tab_{2}+\frac{1}{2}((\ta_{1}\tab_{2})^{2}-(\ta_{2}\tab_{1})^{2}\right).
\end{align*}
\end{proposition}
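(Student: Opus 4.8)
The plan is to obtain $\mathrm{bc}_3$ from the ascent framework of Section \ref{3}, where it is shown that $\mathrm{bc}_3=\frac{1}{3!}\omega_{2,2}$ with $\omega_{2,2}$ the solution of the top ascent equation $\omega_{3,2}+\delb\omega_{3,1}=\del\omega_{2,2}$. The whole task thus reduces to solving the three ascent equations for $k=3$ in closed form, working from the bottom up and exhibiting an explicit $\del$-antiderivative at each stage.

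First I would treat $\omega_{5,0}=\frac{1}{10}\Tr\bm\ta^{5}$: substituting the Cholesky splitting $\bm\ta=b^{-1}(\ta_1+\ta_2)b$ and invoking the vanishing property \eqref{trace} (which annihilates every monomial built from $\ta_1$ alone, or from $\ta_2$ alone beyond degree one) together with cyclicity of the trace, the quintic collapses to a few mixed terms that \eqref{t1} identifies as a total $\del$; this produces $\omega_{4,0}$. For the middle equation $\omega_{4,1}+\delb\omega_{4,0}=\del\omega_{3,1}$, I would compute $\omega_{4,1}=\frac{1}{2}\Tr(\bm\ta^{3}\bm\Theta)$ by eliminating $\bm\Theta$ through \eqref{bold-T-tb} and splitting off its $\del$-exact part, then compute $\delb\omega_{4,0}$ using \eqref{t1}--\eqref{t2} to trade each $\delb\ta_i$ for $-\del\tab_i$ plus quadratic corrections. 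Collecting these corrections into the auxiliary matrices $I_1,I_2$, the two identities $\del I_1-I_1\ta_1-\ta_1I_1=-\ta^{4}$ and $\del I_2+I_2\ta_2+\ta_2I_2=-\ta^{4}$ are what force the non-exact remainders to collapse, yielding the closed expression for $\omega_{3,1}$.

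The decisive and hardest step is the top equation. I would split $\omega_{3,1}=\omega_{3,1}^{(1)}+\omega_{3,1}^{(2)}$ according to the $\bm\ta^{3}\bm\tab$ term and the $I_1,I_2$ terms. For $\omega_{3,2}+\delb\omega_{3,1}^{(1)}$, the curvature identity \eqref{bold-T-tb} together with the auxiliary relation $\Tr(\bm\ta\bm\tab\bm\ta\bm\Theta)=\del\{\frac{1}{2}\Tr(\bm\ta\bm\tab)^{2}\}-\Tr(\bm\ta^{2}\bm\tab\bm\ta\bm\tab)$ isolates a $\del$-exact part plus the remainder $\frac{1}{2}\Tr(\ta^{3}\tab^{2}+\ta^{2}\tab\ta\tab)$. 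The genuine obstacle is $\delb\omega_{3,1}^{(2)}$: expanding $\delb I_1$ and $\delb I_2$ through $\delb\ta=-\del\tab+\ta_1\tab_1+\tab_1\ta_1-\ta_2\tab_2-\tab_2\ta_2$ generates the very long sums $J_1,J_2$ of noncommuting quintic monomials in $\ta_1,\ta_2,\tab_1,\tab_2$. I would peel off the manifestly $\del$-exact piece $J_{11}$ and then show, using only the cyclic invariance of the trace, that the surviving degree-five remainder $J_{12}+J_2+\Tr(-I_1\tab_1^{2}+I_2\tab_2^{2})$ equals $-\Tr(\ta^{3}\tab^{2}+\ta^{2}\tab\ta\tab)$, so that it exactly cancels the remainder left over from the $\omega_{3,1}^{(1)}$ computation. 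This is the crux: there is no conceptual shortcut, and the difficulty is entirely the bookkeeping of matching dozens of noncommutative monomials under cyclic permutation --- the ``miraculous cancellations'' of all non-$\del$-exact terms. Once this is verified, the bracketed $\del$-argument is $\omega_{2,2}$, and multiplying by $\frac{1}{3!}$ gives the stated formula for $\mathrm{bc}_3$, with the prefactor $\frac{1}{12}$ arising as $\frac{1}{6}\cdot\frac{1}{2}$.
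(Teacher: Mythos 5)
Your proposal follows essentially the same route as the paper's own proof: the identical ascent chain $\omega_{5,0}\to\omega_{4,0}\to\omega_{3,1}\to\omega_{2,2}$ in Cholesky coordinates, the same auxiliary matrices $I_{1},I_{2}$ with the identities $\del I_{1}-I_{1}\ta_{1}-\ta_{1}I_{1}=-\ta^{4}$ and $\del I_{2}+I_{2}\ta_{2}+\ta_{2}I_{2}=-\ta^{4}$, the same splitting $\omega_{3,1}=\omega_{3,1}^{(1)}+\omega_{3,1}^{(2)}$, and the same cancellation of $J_{12}+J_{2}+\Tr(-I_{1}\tab_{1}^{2}+I_{2}\tab_{2}^{2})$ against $-\Tr(\ta^{3}\tab^{2}+\ta^{2}\tab\ta\tab)$, culminating in $\mathrm{bc}_{3}=\tfrac{1}{6}\omega_{2,2}$ with the $\tfrac{1}{12}$ prefactor. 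The plan is correct, with the only caveat that the long noncommutative bookkeeping you rightly identify as the crux must actually be carried out term by term, exactly as the paper does.
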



\begin{thebibliography}{99}
\bibitem{GH} Phillip Griffiths and Joseph Harris, \emph{Principles of algebraic geometry}, Wiley, 1978.
\bibitem{CS} S.S. Chern and J. Simons, \emph{Characteristic forms and geometric invariants}, Ann. of Math. (2) \textbf{99},  (1974), 48--69.
\bibitem{BC} R. Bott and S.S. Chern, \emph{Hermitian vector bundles and the equidistribution of the zeroes of their holomorphic sections}, Acta Math. \textbf{114} (1965), 71--112.
\bibitem{D} S. Donaldson, \emph{Anti-self dual Yang-Mills connections over complex surfaces and stable vector bundles}, Proc. London Math. Soc., \textbf{50} (1985), 1-26.
\bibitem{Tian} G. Tian, \emph{Bott-Chern forms and geometric stability}, Discrete Contin. Dynam. Systems, \textbf{6} (2000), 211-220.
\bibitem{GS1} H. Gillet and C. Soul\'{e}, \emph{Characteristic classes for algebraic vector bundles with hermitian metric, I}, Ann. of Math., \textbf{131} (1990), 163-203.
\bibitem{Tam} H. Tamvakis, \emph{Arithmetic intersection theory on flag varieties}, Math. Ann. \textbf{314} (1999), 641-665.
\bibitem{LMNS} Andrei Losev, Gregory Moore, Nikita Nekrasov, Samson Shatashvili, \emph{Chiral Lagrangians, Anomalies, Supersymmetry, and Holomorphy}, Nucl.Phys. \textbf{B484} (1997), 196-222.
\bibitem{PT} Vamsi P. Pingali and Leon A. Takhtajan, \emph{On Bott-Chern forms and their applications}, Math. Ann. \textbf{360}:1-2 (2014), 519-546.  
\bibitem{FS} L.D. Faddeev and S.L. Shatashvili, \emph{Algebraic and Hamiltonian methods in the theory of non-Abelian anomalies}, Theoret. and Math. Phys., \textbf{60}:2 (1984), 770-778.
\bibitem{SS} James H. Simons and Dennis Sullivan, \emph{Structured vector bundles define differential
  {$K$}-theory}, In: Quanta of maths, \emph{Clay Math. Proc.}, \textbf{11}, pp. 579--599; Amer. Math. Soc., Providence, RI (2010).
\bibitem{CV} Claire Voisin, \emph{Hodge theory and complex algebraic geometry II}, Cambridge U. Press, 2003.
\bibitem{Igusa} K. Igusa, \emph{Higher Franz-Reidemeister torsion}, AMS/IP Studies in Advanced Mathematics, \textbf{31}, 2002.
\end{thebibliography}
\end{document}